\newtheorem{thr}{Theorem}
\newtheorem{lem}[thr]{Lemma}
\newtheorem{stat}[thr]{Proposition}
\theoremstyle{definition}
\newtheorem{defn}[thr]{Definition}
\newtheorem{example}[thr]{Example}
\newtheorem{que}[thr]{Question}
\theoremstyle{remark}
\def\R{\mathbb{R}}
\def\op{\oplus}
\def\ot{\otimes}
\def\R{\mathbb{R}}
\def\C{\mathbb{C}}
\def\F{\mathbb{F}}
\def\trop{\textrm{trop}}
\def\Kap{\textrm{K}_\mathbb{F}}
\def\Kf{\textbf{H}_\mathbb{F}}
\def\Kfh{\textbf{H}}
\def\K{\textrm{K}}
\begin{document}

\title{On matrices with different tropical and Kapranov ranks}

\author{Yaroslav Shitov}
\address{Moscow State University, Leninskie Gory, 119991, GSP-1, Moscow, Russia}
\email{yaroslav-shitov@yandex.ru}


\begin{abstract}
In this note, we generalize the technique developed in~\cite{CJR} and prove that every $5\times n$ matrix of tropical
rank at most $3$ has Kapranov rank at most $3$, for the ground field that contains at least $4$ elements. For the ground field
either $\F_2$ or $\F_3$, we construct an example of a $5\times 5$ matrix with tropical rank $3$ and Kapranov rank $4$.
\end{abstract}

\maketitle

Tropical mathematics deals with the \textit{tropical semiring}, that is, the set $\R$ of real numbers
with the operations of tropical addition and tropical multiplication defined as $a\op b=\min\{a,b\}$ and $a\ot b=a+b$,
for all $a,b\in\R$. The connection between classical and tropical mathematics can be established with Maslov
dequantization~\cite{Lit, Mas2, Mas1}. The methods of tropical mathematics are important for different
applications~\cite{AGG, BCOQ, HOW}, and are helpful for the study of algebraic geometry~\cite{EML, Mikh}.
The notion of the rank is very interesting in tropical mathematics~\cite{AGG, DSS, BG}, and, in contrast
with the situation of matrices over a field, there are many different rank functions for tropical matrices~\cite{AGG, DSS, BG}.
This note is devoted to the concepts of the tropical and Kapranov rank functions.

We will use the symbol $\F$ to denote a field, and by $\F^*$ we will denote the set of nonzero elements of $\F$.
By $a_{ij}$ we denote an entry of a matrix $A$, by $A^{(j)}$ the $j$th column of $A$, by $A_{(i)}$ the $i$th row,
by $A^\top$ the transpose of $A$. By $A[r_1,\ldots,r_k]$ we denote the submatrix formed by the rows
of $A$ with indexes $r_1,\ldots,r_k$, and by $A[r_1,\ldots,r_k|c_1,\ldots,c_l]$ the submatrix formed by the
columns with indexes $c_1,\ldots,c_l$ of $A[r_1,\ldots,r_k]$.

By $\Kf$ we denote the field~\cite{Poon} that consists of the formal sums of the form $a(t)=\sum_{e\in\R} a_et^{e}$,
where $t$ is a variable, the \textit{coefficients} $\{a_e\}$ belong to a field $\F$, and the \textit{support} $E(a)=\{e\in\R: a_e\neq0\}$
is a well-ordered subset of $\R$ (that is, any nonempty set of $E(a)$ has the least element).
The \textit{degree} of a sum $a\in\Kf^*$ is the exponent of its \textit{leading term}, that is,
$\deg a=\min E(a).$ The element $a_0\in\F$ is called the \textit{constant term} of $a$.
We assume the degree of the zero element from $\Kf$ to equal $+\infty$. The matrix that is obtained from $A\in\left(\Kf^*\right)^{m\times n}$
by entrywise application of the mapping $\deg$ is denoted by $\deg A\in\R^{m\times n}$. Now we can define the notion of the Kapranov
rank~\cite[Corollary 3.4]{DSS}.

\begin{defn}\label{defKap}
The \textit{Kapranov rank} of a matrix $B\in\R^{m\times n}$ with respect to a \textit{ground field} $\F$
is defined to be $$\Kap(B)=\min\left\{\textrm{rank}(A)\left|\,A\in\left(\Kf^*\right)^{m\times n},\,\deg A=B\right.\right\},$$
where $rank$ is the classical rank function of matrices over the field $\Kf$.
\end{defn}

The tropical \textit{permanent} of a matrix $B\in\R^{n\times n}$ is defined to be
\begin{equation}\label{def1}\textrm{perm}(B)=\min\limits_{\sigma\in S_n} \left\{b_{1,\sigma(1)}+\ldots+b_{n,\sigma(n)}\right\},\end{equation}
where $S_n$ denotes the symmetric group on $\{1,\ldots,n\}$.
$B$ is called \textit{tropically singular} if the minimum in~(\ref{def1})
is attained at least twice. Otherwise, $B$ is called \textit{tropically non-singular}.

\begin{defn}\label{deftrop}
The \textit{tropical rank}, $\trop(M)$, of a matrix $M$$\in$$\R^{p\times q}$ is the largest number $r$
such that $M$ contains a tropically non-singular $r$-by-$r$ submatrix.
\end{defn}

The following proposition follows directly from the definitions.

\begin{stat}\label{permut}
The tropical and Kapranov ranks of a matrix remain unchanged after adding a fixed number to every element of some row or some column.
\end{stat}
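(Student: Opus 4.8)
The plan is to reduce everything to the case of adding a fixed constant $c\in\R$ to every entry of a single row, say row $i$; the column case then follows by transposition, since transposing a matrix leaves its classical rank — hence, by Definition~\ref{defKap}, its Kapranov rank — unchanged, and also leaves its tropical rank unchanged (the tropical permanent~(\ref{def1}) of a square matrix is invariant under transposition, because $\sigma\mapsto\sigma^{-1}$ permutes the summands, and so the property of attaining the minimum at least twice is preserved as well).

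For the tropical rank, write $M'$ for the matrix obtained from $M$ by adding $c$ to each entry of row $i$. First I would observe that every $r\times r$ submatrix $N'$ of $M'$ either coincides with the corresponding submatrix $N$ of $M$ (when row $i$ is not among the chosen rows) or is obtained from such an $N$ by adding $c$ to exactly one of its rows. In the latter case every term $b_{1,\sigma(1)}+\ldots+b_{r,\sigma(r)}$ appearing in~(\ref{def1}) for $N'$ exceeds the corresponding term for $N$ by precisely $c$; hence $N$ and $N'$ have the same set of optimal permutations, so $N'$ is tropically singular if and only if $N$ is. Therefore $M$ and $M'$ possess tropically non-singular $r\times r$ submatrices for exactly the same values of $r$, and $\trop(M')=\trop(M)$ follows from Definition~\ref{deftrop}.

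For the Kapranov rank, let $B'$ be obtained from $B\in\R^{m\times n}$ by adding $c$ to row $i$. The key point is that the monomial $t^{c}$ lies in $\Kf^{*}$ — its support $\{c\}$ is trivially well-ordered — and that $\deg(t^{c}a)=c+\deg a$ for every $a\in\Kf^{*}$, while $t^{c}a\neq0$ since $\Kf$ is a field. Consequently, left-multiplication by the invertible diagonal matrix $D\in\Kf^{m\times m}$ whose $(i,i)$ entry is $t^{c}$ and whose other diagonal entries are $1$ (i.e. rescaling row $i$ of $A$ by $t^{c}$) is a bijection $A\mapsto DA$ between $\{A\in(\Kf^{*})^{m\times n}:\deg A=B\}$ and $\{A'\in(\Kf^{*})^{m\times n}:\deg A'=B'\}$, with inverse rescaling row $i$ by $t^{-c}$. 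Since $D$ is invertible over $\Kf$, $\textrm{rank}(DA)=\textrm{rank}(A)$, so the two families of matrices realize exactly the same ranks, and taking the minimum in Definition~\ref{defKap} gives $\Kap(B')=\Kap(B)$.

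This argument is essentially bookkeeping, and I expect no genuine obstacle; the only places that demand a moment's care are the verification that the rescaling scalars $t^{c}$ really belong to $\Kf$ and interact with $\deg$ as stated, and — on the tropical side — the observation that shifting one row of a square submatrix by a constant shifts every term of the permanent expansion by that same constant, so that the multiplicity of the minimum, and hence tropical (non-)singularity, is preserved.
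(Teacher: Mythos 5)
Your proof is correct, and it is exactly the routine verification that the paper leaves implicit when it says the proposition ``follows directly from the definitions'': the row shift multiplies one row of a lift by $t^{c}$ (preserving classical rank) and adds $c$ to every permutation term of any square submatrix containing that row (preserving the multiplicity of the minimum), with the column case handled by transposition. No gaps.
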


For $a$ and $b$ vectors from $\left(\R\cup\{+\infty\}\right)^m$, we denote the set of all $j$ that provide the minimum
for $\min_{j=1}^m\{a_j+b_j\}$ by $\Theta(a,b)$.
The rows of a matrix $A\in\R^{m\times n}$ are called \textit{tropically linearly dependent}
(or simply \textit{tropically dependent}) if there exists $\lambda\in\R^m$ such that $\Theta(\lambda,A^{(j)})\geq 2$, for every $j\in\{1,\ldots,n\}$.
In this case, $\lambda$ is said to \textit{realize the tropical dependence} of the rows of $A$.
If the rows are not tropically dependent, then they are called \textit{tropically independent}.
The following theorem~\cite[Theorem 5.11]{Iz} plays an important role for our considerations.

\begin{thr}\label{tropthr}
The tropical rank of a matrix $A\in\R^{m\times n}$ equals the cardinality of the largest tropically independent family of rows of $A$.
\end{thr}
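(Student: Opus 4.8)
Write $\iota(A)$ for the cardinality of a largest tropically independent family of rows of $A$, so that the assertion reads $\trop(A)=\iota(A)$. The plan is to reduce both inequalities to the \emph{square case}: for $B\in\R^{r\times r}$, the matrix $B$ is tropically singular if and only if its rows are tropically dependent. Granting this, the inequality $\iota(A)\ge\trop(A)$ is immediate --- if $B=A[r_1,\dots,r_k|c_1,\dots,c_k]$ is tropically non-singular with $k=\trop(A)$, then $A_{(r_1)},\dots,A_{(r_k)}$ are tropically independent, since a $\lambda\in\R^k$ realizing their dependence would, restricted to the columns $c_1,\dots,c_k$, realize the dependence of the rows of $B$ and make $B$ tropically singular. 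For $\iota(A)\le\trop(A)$ it suffices, by contraposition, to show that if every $r\times r$ submatrix of a matrix $M\in\R^{r\times n}$ is tropically singular, then the rows of $M$ are tropically dependent.

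To deduce this from the square case, attach to each column $j$ the set $H_j$ of those $\lambda\in\R^r$ for which the minimum in $\min_i(\lambda_i+m_{ij})$ is attained at least twice; this is a tropical hyperplane, in particular a tropically convex subset of $\R^r$ that is invariant under adding a common constant to all coordinates. By the square case, a choice of $r$ columns $C$ yields a tropically singular submatrix exactly when $\bigcap_{j\in C}H_j\neq\emptyset$, so the hypothesis says that every $r$ of the $H_j$ share a common point; a tropical Helly theorem (the Helly number here being at most $r$) then forces $\bigcap_{j=1}^nH_j\neq\emptyset$, which is precisely the tropical dependence of the rows of $M$. One can instead argue by induction on $n-r$, carrying along the polyhedral set of admissible $\lambda$, but a Helly-type input appears unavoidable.

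It remains to prove the square case. The direction \emph{dependent $\Rightarrow$ singular} is a matching argument. Using Proposition~\ref{permut} to add constants to rows and columns, we may assume that every column of $B$ attains its minimum at least twice; let $G$ be the bipartite graph with an edge $(i,j)$ whenever $b_{ij}$ is the minimum of column $j$, so every column of $G$ has degree at least $2$. If $G$ has a perfect matching it has at least two of them --- a bipartite graph with a unique perfect matching has a vertex of degree $1$ in each part, which the degree condition on the columns rules out --- and the two matchings give two permutations attaining $\textrm{perm}(B)=\sum_j\min_i b_{ij}$. If $G$ has no perfect matching, a minimal deficient set of columns together with the degree condition still forces $\textrm{perm}(B)$ to be attained at least twice; this needs a short separate analysis.

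The direction \emph{singular $\Rightarrow$ dependent} is where the real work lies, since a realizing vector $\lambda$ must be produced. Normalizing so that each column minimum of $B$ is $0$ (Proposition~\ref{permut}), tropical singularity means $\textrm{perm}(B)$ is attained by at least two permutations. The attack is through the optimal face of the assignment polytope: one fixes potentials $u_i,v_j$ with $u_i+v_j\le b_{ij}$ and equality along every optimal permutation, observes that two optimal permutations force an alternating cycle in the graph of tight edges, and notes that $\lambda=-u$ already ties every column possessing at least two tight edges; the columns with a single tight edge are then handled by an induction on $r$, splitting on whether the graph of column minima of a normalization of $B$ has a perfect matching. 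I expect this construction of $\lambda$ --- and, to a lesser extent, the no-perfect-matching case above --- to be the only genuinely delicate points, the rest being routine uses of Proposition~\ref{permut} and the definitions.
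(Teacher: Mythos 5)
The paper does not prove this statement at all: it is imported verbatim from \cite[Theorem 5.11]{Iz}, so there is no internal proof to measure yours against. Judged on its own terms, your outline has a sensible architecture (reduce everything to the square equivalence ``tropically singular $\Leftrightarrow$ rows tropically dependent'', then pass from $r\times r$ to $r\times n$), but it is a plan rather than a proof, because the two steps you yourself flag as delicate carry essentially all of the content of the theorem. The crux is ``singular $\Rightarrow$ dependent'' for square matrices: the candidate $\lambda=-u$ built from optimal dual potentials fails exactly on columns with a unique tight edge --- for $B=\left(\begin{smallmatrix}0&0&1\\0&0&1\\1&1&0\end{smallmatrix}\right)$ with the natural potentials $u=v=0$, column $3$ has one tight edge and $\lambda=0$ does not tie it --- and ``handled by an induction on $r$'' names no mechanism for retying such columns without destroying the ties already secured. (What one would really need is that \emph{some} choice of optimal potentials has at least two tight edges in every column; that assertion is essentially equivalent to the theorem and is what the known proofs establish, typically by building $\lambda$ from the tropical $(r-1)\times(r-1)$ cofactors.) The Helly reduction is also not free: each $H_j$ is indeed the tropical hyperplane with apex $-M^{(j)}$, these are tropically convex and invariant under adding constants, and the Helly number of tropically convex sets in $r$ coordinates modulo the all-ones vector is $r$ --- but that Helly theorem (via tropical Radon, due to Develin--Sturmfels and Gaubert--Meunier) is itself a result of comparable depth that you invoke rather than prove.

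By contrast, the one gap you could close cheaply is the ``dependent $\Rightarrow$ singular'' direction, where your case split on perfect matchings is unnecessary. Normalize so that every column minimum is $0$ and is attained at least twice, take any permutation $\sigma$ attaining $\textrm{perm}(B)$, orient the edges of $\sigma$ from rows to columns and the remaining zero entries from columns to rows; every vertex then has positive out-degree, so there is a directed (alternating) cycle, and switching $\sigma$ along it yields a different permutation of weight at most --- hence exactly --- $\textrm{perm}(B)$. This handles the no-perfect-matching branch uniformly. In summary: the skeleton is reasonable and the easy inequality $\iota(A)\ge\trop(A)$ is correctly handled, but the theorem's difficulty is concentrated precisely in the square case ``singular $\Rightarrow$ dependent'' and in the Helly-type passage to rectangular matrices, both of which remain open in your write-up; for the purposes of this paper the correct move is simply to cite \cite{Iz}.
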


The present note is devoted to the following question, asked by Develin, Santos, and Sturmfels.

\begin{que}\label{que_dss}~\cite[Section 8, Question (6)]{DSS}
Is there a $5\times5$ matrix having tropical rank $3$ but Kapranov rank $4$?
\end{que}

Chan, Jensen, and Rubei~\cite[Corollary 1.5]{CJR} have shown that $\trop(A)=\K_\C(A)$, for every matrix $A\in\R^{5\times n}$.
Therefore, they answer Question~\ref{que_dss} in the most important case, the case when the Kapranov rank function is considered
with respect to a ground field $\C$. On the other hand, in the paper~\cite{DSS}, where Question~\ref{que_dss} was proposed, the Kapranov
rank was understood with respect to an arbitrary ground field~\cite[Definition 3.9]{DSS}. In our note, we consider the problem in the
case of an arbitrary field, we generalize the technique developed in~\cite{CJR} and give a general answer for Question~\ref{que_dss}.
For a field $\F$ satisfying $|\F|\geq4$ and a matrix $B\in\R^{5\times n}$ satisfying $\trop(B)\leq3$, we show that $\K_\F(B)\leq3$.
We provide examples of matrices $C$ with tropical rank $3$ satisfying $\Kap(C)=4$ if the field $\F$ is either $\F_2$ or $\F_3$.
The following lemma is helpful to prove Lemma~\ref{system2}, which gives a generalization for the technique developed in~\cite{CJR}
and holds for a more general class of ground fields.

\begin{lem}\label{forsystem2}
Let $|\F|\geq4$, $S\in\Kf^{2\times2}$. Then there exists $\xi\in\F^*$ such that
$\deg(\xi s_{i1}+s_{i2})=\min\{\deg s_{i1}, \deg s_{i2}\}$, for $i\in\{1,2\}$.
\end{lem}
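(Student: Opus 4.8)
The plan is to reduce the statement to an elementary counting argument on $\F^*$, by showing that each of the two rows of $S$ rules out at most one value of the scalar $\xi$.

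First I would fix $i\in\{1,2\}$ and analyze $\deg(\xi s_{i1}+s_{i2})$ as a function of $\xi\in\F^*$, distinguishing two cases according to whether $\deg s_{i1}=\deg s_{i2}$. If $\deg s_{i1}\neq\deg s_{i2}$ — a case that subsumes those in which $s_{i1}$ or $s_{i2}$ vanishes, so that one of the degrees is $+\infty$ — then multiplication by $\xi\in\F^*$ preserves the degree, and adding a formal sum of strictly larger degree cannot affect the leading term; hence $\deg(\xi s_{i1}+s_{i2})=\min\{\deg s_{i1},\deg s_{i2}\}$ for \emph{every} $\xi\in\F^*$, so row $i$ imposes no restriction at all.

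The remaining case is $\deg s_{i1}=\deg s_{i2}=d_i$ for some $d_i\in\R$. Write $c_{i1},c_{i2}\in\F^*$ for the coefficients of $t^{d_i}$ in $s_{i1}$ and $s_{i2}$ respectively. Since the supports of $s_{i1}$ and $s_{i2}$ are contained in $[d_i,+\infty)$, the same is true for the support of $\xi s_{i1}+s_{i2}$, whose coefficient of $t^{d_i}$ equals $\xi c_{i1}+c_{i2}$. Therefore $\deg(\xi s_{i1}+s_{i2})=d_i=\min\{\deg s_{i1},\deg s_{i2}\}$ precisely when $\xi c_{i1}+c_{i2}\neq 0$, i.e.\ for all $\xi\in\F^*$ except possibly the single value $-c_{i2}c_{i1}^{-1}$. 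Thus row $i$ forbids at most one element of $\F^*$.

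Combining the two rows, at most two values of $\xi$ are excluded, while $|\F^*|=|\F|-1\geq 3$; any $\xi\in\F^*$ avoiding the (at most two) forbidden values then satisfies the required condition for both $i=1$ and $i=2$. I do not anticipate a genuine obstacle here: the argument is a short case analysis followed by a pigeonhole count, and the only point demanding care is the bookkeeping of the degenerate subcases in which some entry $s_{ij}$ is zero, which is exactly why the analysis is organized around whether the two degrees in a given row coincide.
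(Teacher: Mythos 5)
Your argument is correct and is essentially the paper's own proof: the paper simply chooses $\xi\in\F\setminus\{0,-\sigma_{12}/\sigma_{11},-\sigma_{22}/\sigma_{21}\}$, where $\sigma_{ij}$ is the leading coefficient of $s_{ij}$ (taken arbitrarily in $\F^*$ when $s_{ij}=0$), which is exactly your count of at most one forbidden scalar per row together with the exclusion of $0$. One cosmetic remark: when $s_{i1}=s_{i2}=0$ both degrees equal $+\infty$, so that row falls outside the two cases as you literally state them, but the conclusion there is trivial.
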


\begin{proof}
If $s_{ij}\neq0$, we denote the coefficient of the leading term of $s_{ij}$ by $\sigma_{ij}$.
If $s_{ij}=0$, we choose $\sigma_{ij}\in\F^*$ arbitrarily. Now it remains to choose
$\xi\in\F\setminus\{0,-\frac{\sigma_{12}}{\sigma_{11}},-\frac{\sigma_{22}}{\sigma_{21}}\}$.
\end{proof}

\begin{lem}\label{system2}
Let $|\F|\geq4$, let a matrix $A\in\Kf^{5\times2}$ be such that $\textrm{rank}(A)=2$ and
$deg(a_{p1}a_{q2}-a_{q1}a_{p2})$$=$$\min\{\deg a_{p1}+$$\deg a_{q2},$$\deg a_{q1}$$+$$\deg a_{p2}\},$ for every different $p,q\in\{1,\ldots,5\}$.
Let also $B\in\R^{5\times n}$, denote
$$\Theta_{1j}=\Theta(\deg A^{(1)}, B^{(j)}),\,\Theta_{2j}=\Theta(\deg A^{(2)}, B^{(j)})\,\mbox{  for every $j\in\{1,\ldots,n\}$ }.$$
If $\left|\Theta_{1j}\right|\geq2$, $\left|\Theta_{2j}\right|\geq2$,
$\left|\Theta_{1j}\cup\Theta_{2j}\right|\geq3$, for every $j$, then $\K_\F(B)\leq3$.
\end{lem}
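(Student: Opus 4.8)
The plan is to build an explicit $5\times3$ matrix $C\in\left(\Kf^*\right)^{5\times n}$ of rank at most $3$ with $\deg C=B$, which by Definition~\ref{defKap} will give $\K_\F(B)\le3$. The natural attempt is to take the first two columns of $C$ to be (scaled versions of) the two columns of $A$, and a third column chosen so that the rank stays at $3$ while the degrees come out right. Since $A$ has rank $2$, every column of $C$ will be in the span of $A^{(1)}, A^{(2)}$, and a third explicit column; the rank-$3$ bound is then automatic. The whole difficulty is degree control: for each $j$ we must produce a column $C^{(j)}\in\Kf^m$, lying in the span of $A^{(1)}, A^{(2)}$ and one fixed auxiliary vector $v$, with $\deg C^{(j)}=B^{(j)}$.

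First I would use Proposition~\ref{permut} to normalize. Rescaling rows and columns of $B$ (equivalently multiplying rows and columns of $A$ and the corresponding entries of $B$ by powers of $t$) does not change $\K_\F(B)$, so I can arrange that $\deg A^{(1)}=\deg A^{(2)}=0$ as vectors in $\R^5$ (after subtracting off the row minima), or more precisely set things up so that, for each $j$, the quantities $\min_i(\deg a_{i1}+b_{ij})$ and $\min_i(\deg a_{i2}+b_{ij})$ are both $0$; the sets $\Theta_{1j},\Theta_{2j}$ are exactly the rows achieving these minima. The hypotheses $|\Theta_{1j}|,|\Theta_{2j}|\ge2$ and $|\Theta_{1j}\cup\Theta_{2j}|\ge3$ say that in each column there are at least three rows $i$ where the degree of $a_{i1}t^{?}$ or $a_{i2}t^{?}$ is minimal, with at least two for each of the two columns. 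This is precisely the combinatorial input that should let us cancel leading terms in a controlled way.

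Next, for a fixed column $j$, I would look for $C^{(j)}=x\,A^{(1)}+y\,A^{(2)}$ with $x,y\in\Kf$ of the form $x=t^{\alpha_j}\xi_j$, $y=t^{\beta_j}\eta_j$ for suitable real exponents and field constants, so that in row $i$ the entry $x a_{i1}+y a_{i2}$ has degree exactly $b_{ij}$. For rows in $\Theta_{1j}\cap\Theta_{2j}$ the two summands have the same degree and we need their leading coefficients not to cancel — this is exactly where Lemma~\ref{forsystem2} (with $|\F|\ge4$) is invoked: it furnishes the constant $\xi$ avoiding the two forbidden ratios, so that $\deg(\xi s_{i1}+s_{i2})$ stays minimal in two prescribed rows simultaneously. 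For rows in $\Theta_{1j}\setminus\Theta_{2j}$ the first term already dominates, and symmetrically for $\Theta_{2j}\setminus\Theta_{1j}$; for rows outside $\Theta_{1j}\cup\Theta_{2j}$ both summands have degree strictly above $b_{ij}$, so we must add a correction. This is where the hypothesis $\deg(a_{p1}a_{q2}-a_{q1}a_{p2})=\min\{\deg a_{p1}+\deg a_{q2},\deg a_{q1}+\deg a_{p2}\}$ enters: it guarantees that the $2\times2$ minors of $A$ are tropically non-singular, which lets us solve, row by row, for a third scalar vector $v$ (or equivalently adjust $x,y$ to be genuine elements of $\Kf$ rather than monomials) hitting the prescribed degree in the remaining rows without destroying the already-fixed rows. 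Concretely, I expect the argument to reduce to: choose two rows $p\in\Theta_{1j}$, $q\in\Theta_{2j}$ with $p\ne q$ (possible by the cardinality hypotheses), use the non-singular minor on rows $p,q$ to define a change of basis of the column space, and in that basis solve for the third column coordinatewise.

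The main obstacle will be handling the rows outside $\Theta_{1j}\cup\Theta_{2j}$ uniformly in $j$ while keeping the global rank at $3$ — that is, making the auxiliary vector $v$ a single element of $\Kf^5$ that works for every column, rather than a per-column gadget. I anticipate the right move is to fix one pair of rows $p^*,q^*$ that lies in $\Theta_{1j}\cup\Theta_{2j}$ for a convenient column and to build $v$ from the inverse of the corresponding $2\times2$ block of $A$ (whose determinant behaves correctly by the hypothesis on minors); then each column $C^{(j)}$ becomes an explicit $\Kf$-combination of $A^{(1)},A^{(2)},v$ with degree vector $B^{(j)}$, and writing $C=[A^{(1)}\ A^{(2)}\ v]\,X$ for a $3\times n$ coefficient matrix $X\in\Kf^{3\times n}$ exhibits $\mathrm{rank}(C)\le3$ and $\deg C=B$, finishing the proof.
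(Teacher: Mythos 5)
Your strategy is misaligned with what the hypotheses actually provide, and the degree bookkeeping at its core fails. The set $\Theta_{1j}=\Theta(\deg A^{(1)},B^{(j)})$ records where the minimum of the \emph{tropical inner product} $\min_{i}\{\deg a_{i1}+b_{ij}\}$ is attained; the hypothesis $|\Theta_{1j}|\geq2$ says this minimum is attained at least twice, i.e.\ $B^{(j)}$ lies on the tropical hyperplane with coefficient vector $\deg A^{(1)}$. That is an \emph{orthogonality} condition, and the lift it supports is a matrix $C$ with $\deg C=B$ whose columns are annihilated by $A^{\top}$, i.e.\ $\sum_i a_{i1}c_{ij}=\sum_i a_{i2}c_{ij}=0$ for all $j$; since $\mathrm{rank}(A)=2$, the columns of $C$ then lie in the $3$-dimensional space $\ker(A^{\top})$, which is what forces $\mathrm{rank}(C)\leq3$. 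You instead try to place $C^{(j)}$ in $\mathrm{span}(A^{(1)},A^{(2)},v)$ and to arrange $\deg(xa_{i1}+ya_{i2})=b_{ij}$ with $x=t^{\alpha_j}\xi_j$. But for $i\in\Theta_{1j}$ the defining relation is $\deg a_{i1}+b_{ij}=\theta_1$, so your requirement $\alpha_j+\deg a_{i1}=b_{ij}$ forces $\alpha_j=\theta_1-2\deg a_{i1}$, which depends on $i$ and cannot hold simultaneously for the two or more rows of $\Theta_{1j}$ unless $\deg A^{(1)}$ happens to be constant there. (Your proposed normalization $\deg A^{(1)}=\deg A^{(2)}=0$ is also not achievable by the operations of Proposition~\ref{permut}: adding constants to rows can flatten one column of $\deg A$ but not both.) Nothing in the hypotheses makes $B^{(j)}$ tropically close to the column span of $\deg A$, so the "correction vector $v$" would have to carry essentially the whole column and cannot be chosen uniformly in $j$ --- the obstacle you flag in your last paragraph is real and is not resolved.

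The paper's proof goes the dual way, and is column-by-column with no global auxiliary vector needed. Fix $j$ and reindex so that $1\in\Theta_{1j}$, $2\in\Theta_{2j}$, and both sets meet $\{3,4,5\}$ (this is exactly where $|\Theta_{1j}\cup\Theta_{2j}|\geq3$ enters). Set $c_{\iota j}=-\xi_\iota t^{b_{\iota j}}$ for $\iota=3,4,5$ with constants $\xi_\iota\in\F^*$ to be chosen, and solve the $2\times2$ linear system expressing $\sum_i a_{i1}c_{ij}=\sum_i a_{i2}c_{ij}=0$ for the remaining entries $c_{1j},c_{2j}$ by Cramer's rule. The determinant hypothesis guarantees that the degrees of the $2\times2$ minors $\det A[k,\iota]$ are the expected minima, and Lemma~\ref{forsystem2} (this is the only place $|\F|\geq4$ is used) supplies $\xi_3,\xi_4,\xi_5$ preventing cancellation in the two Cramer numerators, so that the solution satisfies $\deg c_{1j}=b_{1j}$ and $\deg c_{2j}=b_{2j}$. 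Assembling the columns gives $\deg C=B$ with two independent vanishing row combinations $\sum_i a_{ik}C_{(i)}=0$, $k=1,2$, hence $\mathrm{rank}(C)\leq3$ and $\K_\F(B)\leq3$ by Definition~\ref{defKap}. To repair your write-up, replace "columns of $C$ in the span of $A$'s columns plus $v$" by "columns of $C$ in $\ker(A^{\top})$" and redo the degree analysis in that frame.
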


\begin{proof}
We fix an arbitrary $j\in\{1,\ldots,n\}$ and denote $\theta_1=\min_{i=1}^5\{\deg a_{i1}+b_{ij}\}$, $\theta_2=\min_{i=1}^5\{\deg a_{i2}+b_{ij}\}$.
We assume without a loss of generality that $1\in\Theta_{1j}$, $2\in\Theta_{2j}$, and that both $\Theta_1$ and $\Theta_2$ have non-empty
intersections with $\{3,4,5\}$. These settings imply that
$$\min_{\iota=3}^5\left\{\deg\det A[1,\iota]+b_{1j}+b_{\iota j}\right\}=\min_{\iota=3}^5\left\{\deg\det A[2,\iota]+b_{2j}+b_{\iota j}\right\}=\theta_1+\theta_2.$$
From Lemma~\ref{forsystem2} it then follows that there exist $\xi_3,\xi_4,\xi_5\in\F^*$ such that
\begin{equation}\label{eqsys2-1}
\deg\left(\sum_{\iota=3}^5\det A[1,\iota]t^{b_{1j}+b_{\iota j}}\xi_\iota\right)=
\deg\left(\sum_{\iota=3}^5\det A[2,\iota]t^{b_{2j}+b_{\iota j}}\xi_\iota\right)=\theta_1+\theta_2.
\end{equation}
Cramer's rule then implies that the solution $(x_1,x_2)$ of
\begin{equation}\label{eqsys2-2}\begin{cases}
a_{11}t^{b_{1j}}x_1+a_{21}t^{b_{2j}}x_2=\sum_{\iota=3}^5\xi_\iota a_{\iota1}t^{b_{\iota j}},\\
a_{12}t^{b_{1j}}x_1+a_{22}t^{b_{2j}}x_2=\sum_{\iota=3}^5\xi_\iota a_{\iota2}t^{b_{\iota j}}
\end{cases}\end{equation}
satisfies $\deg x_1=\deg x_2=0$. We set $c_{1j}=x_1t^{b_{1j}}$, $c_{2j}=x_2t^{b_{2j}}$,
$c_{\iota j}=-\xi_\iota t^{b_{\iota j}}$, for $\iota\in\{3,4,5\}$. The equations~(\ref{eqsys2-2})
imply that $\sum_{i=1}^5 a_{i1}c_{ij}=\sum_{i=1}^5 a_{i2}c_{ij}=0$. Since $j\in\{1,\ldots,n\}$
has been chosen arbitrarily, we can construct the matrix $C$ such that $B=\deg C$, and the rows
$\sum_{i=1}^5 a_{i1}C_{(i)}$ and $\sum_{i=1}^5 a_{i2}C_{(i)}$ both consist of zero elements.
From Definition~\ref{defKap} it now follows that $\K_\F(B)\leq3$.
\end{proof}

\begin{lem}\label{3ic}
Let the entries of a matrix $B\in\R^{5\times n}$ be nonnegative, every column of $B$ contain at least three zeros.
If $|\F|\geq4$, then $\K_\F(B)\leq3$.
\end{lem}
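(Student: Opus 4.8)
I would prove Lemma~\ref{3ic} by reducing it to Lemma~\ref{system2}. The idea is to find a suitable matrix $A\in\Kf^{5\times2}$ of rank $2$ whose column degree vectors "support" the zero pattern of $B$ in the sense required by Lemma~\ref{system2}. Since every column of $B$ has at least three zeros among five entries, any two columns of $B$ share a zero in at least one common row; more importantly, for each column $j$ there is a set $Z_j\subseteq\{1,\dots,5\}$ with $|Z_j|\ge3$ on which $b_{ij}=0$, and the minimum of $\min_i\{\deg a_{i1}+b_{ij}\}$ over rows in $Z_j$ will be forced to be small if we arrange the degrees of $A$ appropriately.

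First I would choose $A$ concretely. A natural candidate is a $5\times2$ matrix over $\Kf$ whose degree matrix $\deg A$ has all entries equal to $0$, for instance $A$ with entries in $\F^*\subseteq\Kf^*$ (constants, degree $0$), chosen generically so that $\mathrm{rank}(A)=2$ and every $2\times2$ minor is nonzero — over an infinite-coefficient situation this is easy, but since $\F$ may be finite I would instead perturb: take $a_{ij}\in\Kf^*$ with $\deg a_{ij}=0$ and leading coefficients chosen so that all ten $2\times2$ minors $a_{p1}a_{q2}-a_{q1}a_{p2}$ are nonzero and have degree $0$ (this is the hypothesis $\deg\det A[p,\iota]=\min\{\deg a_{p1}+\deg a_{q2},\deg a_{q1}+\deg a_{p2}\}=0$ needed in Lemma~\ref{system2}). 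With $\deg A^{(1)}=\deg A^{(2)}=(0,0,0,0,0)^\top$, we get $\Theta_{1j}=\Theta_{2j}=Z_j$, the set of rows where column $j$ of $B$ is zero. Since $|Z_j|\ge3$ we immediately have $|\Theta_{1j}|\ge2$, $|\Theta_{2j}|\ge2$, and $|\Theta_{1j}\cup\Theta_{2j}|=|Z_j|\ge3$, so all the hypotheses of Lemma~\ref{system2} are met and we conclude $\K_\F(B)\le3$.

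The one genuine obstacle is the existence of the matrix $A$ with all ten $2\times2$ minors nonzero of degree $0$ when $\F$ is small (e.g.\ $|\F|=4$). If I only use constant entries from $\F^*$, I need ten pairs of rows to have linearly independent $2$-vectors over $\F$, i.e.\ five points of $\P^1(\F)$ in general position; for $|\F|=4$ we have $|\P^1(\F)|=5$, so the five rows must be the five distinct points of $\P^1(\F_4)$ — that works exactly. For general $|\F|\ge4$ we have $|\P^1(\F)|\ge5$ and can pick five distinct points, so constant entries suffice and no perturbation is needed; the degree-$0$ condition on minors is automatic since a nonzero constant has degree $0$. Thus the "hard part" is really just this counting check, which goes through precisely at the threshold $|\F|\ge4$ — the same threshold appearing in Lemma~\ref{forsystem2} and Lemma~\ref{system2}.

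Assembling: fix $A\in(\F^*)^{5\times2}$ whose five rows are distinct points of $\P^1(\F)$ (possible since $|\F|\ge4$ implies $|\P^1(\F)|\ge5$); then $\mathrm{rank}(A)=2$, every $2\times2$ minor is a nonzero constant of degree $0$, and $\deg A^{(1)}=\deg A^{(2)}=0$. For $B$ as in the statement, $\Theta_{1j}=\Theta_{2j}$ equals the zero-set of $B^{(j)}$, which has size $\ge3$, so the hypotheses of Lemma~\ref{system2} hold for all $j$, and $\K_\F(B)\le3$ follows.
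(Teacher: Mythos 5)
Your overall strategy --- feeding a fixed rank-$2$ matrix $A$ with all degrees $0$ into Lemma~\ref{system2} so that $\Theta_{1j}$ and $\Theta_{2j}$ both equal the zero set $Z_j$ of the $j$th column of $B$ --- is the right one and is essentially the paper's. But the existence claim at its core fails exactly at the threshold the lemma is about. If every entry of $A$ has degree $0$, the hypothesis $\deg(a_{p1}a_{q2}-a_{q1}a_{p2})=\min\{\deg a_{p1}+\deg a_{q2},\deg a_{q1}+\deg a_{p2}\}=0$ forces the ten $2\times2$ minors of the matrix of leading (= constant) coefficients to be nonzero in $\F$; that is, the five rows of constant terms must be pairwise non-proportional vectors in $(\F^*)^2$. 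Up to scaling such a vector is $(1,c)$ with $c\in\F^*$, so there are only $|\F|-1$ pairwise non-proportional choices: $3$ when $|\F|=4$ and $4$ when $|\F|=5$. Your remark that the five points of the projective line over $\F_4$ ``work exactly'' overlooks that two of them, $(1:0)$ and $(0:1)$, have a zero coordinate and cannot be realized by rows with entries in $\F^*$; and the perturbation you sketch cannot repair this, since any degree-$0$ perturbation leaves the constant-term minor unchanged. So for $|\F|\in\{4,5\}$ no matrix $A$ of the kind you describe exists.

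The fix --- and it is what the paper does --- is to allow genuinely zero entries in $A$: Lemma~\ref{system2} only asks for $A\in\Kf^{5\times2}$, not $A\in\left(\Kf^*\right)^{5\times2}$, and a zero entry contributes degree $+\infty$. Choosing distinct $\eta,\zeta\in\F\setminus\{0,1\}$ (possible since $|\F|\geq4$) and $A=\left(\begin{smallmatrix}1&1&1&1&0\\ 1&\eta&\zeta&0&1\end{smallmatrix}\right)^\top$, all ten minors are nonzero and the degree condition holds. Now $\deg A^{(1)}=(0,0,0,0,+\infty)$ and $\deg A^{(2)}=(0,0,0,+\infty,0)$, so $\Theta_{1j}=Z_j\setminus\{5\}$ and $\Theta_{2j}=Z_j\setminus\{4\}$, each still of cardinality at least $2$ because $|Z_j|\geq3$, while $\Theta_{1j}\cup\Theta_{2j}=Z_j$ has cardinality at least $3$. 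This is precisely where the hypothesis of three zeros per column is consumed: it provides the slack to drop one (different) row from each $\Theta_{ij}$, which is what permits the two zero entries of $A$ that small fields force upon you. Your argument as written is correct only for $|\F|\geq6$.
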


\begin{proof}
There exist different $\eta,\zeta\in\F\setminus\{0,1\}$. We set
$A=\left(\begin{smallmatrix}1&1&1&1&0\\
1&\eta&\zeta&0&1\end{smallmatrix}\right)^\top\in\Kf^{5\times2}$.
Now the result follows from Lemma~\ref{system2}.
\end{proof}

\begin{lem}\label{hardl}
Let the entries of a matrix $B\in\R^{5\times n}$ be all nonnegative, and
\begin{equation}\label{eqh1}
B=\left(
\begin{array}{c|c|c|c|c}
0\ldots0& B_1 & B_2 & B_3 & B_4 \\
0\ldots0&  &  &  \\\hline
& 0\ldots0 & 0\ldots0 & \gamma_{1}\ldots\gamma_{r} & 0\ldots0\\
B'& 0\ldots0 & \beta_{1}\ldots\beta_{q} & 0\ldots0 & 0\ldots0\\
\underbrace{\mbox{\,\,\,\,\,\,\,\,\,\,\,\,\,\,\,\,}}_{v}&\alpha_1\ldots\alpha_p&0\ldots0&0\ldots0& \underbrace{0\ldots0}_{s}\\
\end{array}
\right),
\end{equation}
where $v>0$, $p>0$, $q+r+s>0$, either $B'$ or $(B_1|\ldots|B_4)$ consists of positive numbers, and
the numbers $\alpha_1,\ldots,\alpha_p, \beta_1,\ldots,\beta_{q}, \gamma_1,\ldots,\gamma_r$ are positive.
If $|\F|\geq4$ and $\trop(B)\leq3$, then $\K_\F(B)\leq3$.
\end{lem}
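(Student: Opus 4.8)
The plan is to reduce, case by case, to Lemma~\ref{system2} (or, in the easy cases, to Lemma~\ref{3ic}): it suffices to produce a rank-$2$ matrix $A\in\Kf^{5\times2}$ with $\deg\det A[p,q]=\min\{\deg a_{p1}+\deg a_{q2},\deg a_{q1}+\deg a_{p2}\}$ for all distinct $p,q$ and such that, in every column $j$ of $B$, the sets $\Theta_{1j}=\Theta(\deg A^{(1)},B^{(j)})$ and $\Theta_{2j}=\Theta(\deg A^{(2)},B^{(j)})$ satisfy $|\Theta_{1j}|\geq2$, $|\Theta_{2j}|\geq2$ and $|\Theta_{1j}\cup\Theta_{2j}|\geq3$. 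The hypothesis $\trop(B)\leq3$ enters precisely to guarantee that such an $A$ exists.

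First I would make the content of $\trop(B)\leq3$ explicit for matrices of the form~(\ref{eqh1}), via Theorem~\ref{tropthr}: every four rows of $B$ are tropically dependent. Because of the zero pattern in~(\ref{eqh1}) and the positivity of the $\alpha_\iota,\beta_\iota,\gamma_\iota$, many of the resulting $4\times4$ submatrices are tropically singular for free --- for instance every submatrix on rows $\{1,3,4,5\}$ (or on $\{2,3,4,5\}$) built from one column of each of the $V$-, $\alpha$-, $\beta$- and $\gamma$-blocks has tropical permanent $0$ attained at least twice. The informative submatrices are of two kinds: those on rows $\{1,3,4,5\}$ or $\{2,3,4,5\}$ built mostly from $V$-columns, whose tropical singularity forces the rows of $B'$ to be ``nearly constant'' along their columns (constant up to entries dominated by the $\alpha_\iota,\beta_\iota,\gamma_\iota$); and those in which rows $1$ and $2$ both appear, whose tropical singularity forces analogous rigidity of $B_1,\ldots,B_4$ and of their interaction with $B'$ and the $\alpha_\iota,\beta_\iota,\gamma_\iota$. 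I would organise the rest according to the dichotomy ``$B'$ positive'' versus ``$(B_1|\ldots|B_4)$ positive'' and to which of $q,r,s$ vanish, as these fix the zero patterns of the columns of $B$.

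In each resulting case I would exhibit $\deg A\in(\R\cup\{+\infty\})^{5\times2}$ explicitly. Whenever the relations obtained above let one shift $B$, by Proposition~\ref{permut}, to a matrix with at least three zeros in every column, Lemma~\ref{3ic} applies directly. Otherwise I would take the two columns of $\deg A$ to be two tropical dependence vectors of the rows of $B$ whose supports jointly meet at least three rows in each column; the template is Lemma~\ref{3ic}, where the two degree columns carry a $+\infty$ entry in row $5$ and in row $4$ respectively. A natural candidate here is a dependence supported on rows $3,4,5$ --- legitimate on the $\alpha$-, $\beta$-, $\gamma$- and $S$-columns by the staircase pattern, and on the $V$-columns once a column of $B'$ is known to have a repeated minimum --- together with a second dependence engaging rows $1,2$ on the $V$-columns and a row of $\{1,2,5\}$ on the $\alpha$-columns (and correspondingly on the $\beta$- and $\gamma$-columns), the positions of the $+\infty$ entries varying with the case. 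Once $\deg A$ is fixed, the determinant condition of Lemma~\ref{system2} is arranged by choosing the leading coefficients of the nonzero entries of $A$ outside a finite list of forbidden ratios, which is possible since $|\F|\geq4$, by the same idea as in the proof of Lemma~\ref{forsystem2}. A column-by-column check of the three $\Theta$-inequalities then finishes the case.

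The main obstacle is the first step together with the bookkeeping it forces: one must pin down exactly how much rigidity $\trop(B)\leq3$ imposes on $B'$ and on $B_1,\ldots,B_4$, how this interacts with the sizes of the $\alpha_\iota,\beta_\iota,\gamma_\iota$, and then verify that in each of the finitely many resulting configurations (finiteness coming from the positivity hypotheses) a pair of dependence vectors with the covering property can actually be written down. The delicate point is making $|\Theta_{1j}\cup\Theta_{2j}|\geq3$ hold simultaneously on the $V$-columns, where rows $1$ and $2$ vanish, and on the $\alpha/\beta/\gamma/S$-columns, where the staircase among rows $3,4,5$ is in charge.
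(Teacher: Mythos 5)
Your overall strategy is the paper's: use $\trop(B)\leq3$ (via Theorem~\ref{tropthr}) to extract two tropical dependence vectors, make them the degree columns of a matrix $A\in\Kf^{5\times2}$ with $+\infty$ entries in two different rows, fix the leading coefficients generically using $|\F|\geq4$ in the spirit of Lemma~\ref{forsystem2}, and conclude by Lemma~\ref{system2}. But the proposal stops exactly where the proof begins, and you say so yourself: the ``main obstacle'' you describe in your last paragraph is the entire content of the lemma. Concretely, the paper does three things you leave open. First, it normalizes by a minimal-counterexample assumption on $p+q+r+s$ and a shift (Proposition~\ref{permut}) making $B'$ positive and $\min(B_1|\ldots|B_4)=0$. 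Second, it rules out the configuration in which both $B_2$ and $B_3$ contain a zero-free column, by exhibiting a tropically non-singular submatrix $B[1,2,3,4|1,j_1,j_2,j_3]$, contradicting $\trop(B)\leq3$; without this step the later verification of $|\Theta_{1j}\cup\Theta_{2j}|\geq3$ fails on such columns. Third, it chooses the dependence vectors on the row sets $\{1,2,4,5\}$ and $\{1,3,4,5\}$, reads off from the block structure of~(\ref{eqh1}) the relations $\lambda_1=\lambda_2\leq\min\{\lambda_4,\lambda_5\}$, $\mu_3=\mu_4\leq\mu_5$, $\mu_4<\mu_1$, and only then checks the three $\Theta$-conditions column by column. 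None of this is routine bookkeeping that can be waved at; it is where the hypotheses $v,p>0$, $q+r+s>0$ and the positivity of the $\alpha_\iota,\beta_\iota,\gamma_\iota$ are actually consumed.

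Moreover, the one concrete candidate you float is flawed: a tropical dependence supported on rows $3,4,5$ need not exist. The hypothesis $\trop(B)\leq3$ only says every \emph{four} rows are dependent; a column of $B'$ may have a strict unique minimum among rows $3,4,5$, and then no vector supported on $\{3,4,5\}$ realizes a dependence on the $V$-columns. Your parenthetical ``once a column of $B'$ is known to have a repeated minimum'' is doing all the work and is never justified. The paper avoids this trap by always keeping row $1$ in both supports (dropping row $3$ in one vector and row $2$ in the other), so that the $V$-columns, where rows $1$ and $2$ vanish, automatically contribute to both $\Theta$-sets. As it stands, your submission is a correct plan with an incorrect trial instantiation, not a proof.
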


\begin{proof}
The proof is by \textit{reductio ad absurdum}.

1. We assume w.l.o.g. that $B$ provides
the minimal value of $p+q+r+s$ over all matrices $D$ of the form~(\ref{eqh1})
that satisfy $\trop(D)\leq3$ and $\K_\F(D)>3$.

2. By $m$ we denote the minimal element
of the matrix $(B_1|\ldots|B_4)$. We add $-m$ to every element of the first two rows of $B$,
$m$ to every element of the first $v$ columns. So by Proposition~\ref{permut},
we can assume without a loss of generality that $B'$ consists of positive numbers and $m=0$.

3. Let each of the matrices $B_2$ and $B_3$ contain a column without zeros
(the numbers of these columns are denoted by $j_1$ and $j_2$). Items 1 and 2 show that there
exists $j_3\in\{1,\ldots,n\}$ such that either $b_{1j_3}=0$, $b_{2j_3}>0$ or $b_{1j_3}>0$, $b_{2j_3}=0$.
Then we note that the matrix $B[1,2,3,4|1,j_1,j_2,j_3]$
is tropically non-singular. Definition~\ref{deftrop} shows that the tropical rank of $B$ is not less than $4$,
so we get a contradiction. Thus we can assume without a loss of generality that every column of $B_3$ contains a zero element.

4. Theorem~\ref{tropthr} implies that there exist
$(\lambda_1,\lambda_2,\lambda_4,\lambda_5),(\mu_1,\mu_3,\mu_4,\mu_5)\in\R^4$ that
realize the tropical dependence of the rows of $B[1,2,4,5]$ and $B[1,3,4,5]$, respectively
We denote $\Lambda=(\lambda_1,\lambda_2,+\infty,\lambda_4,\lambda_5)$ and
$M=(\mu_1,+\infty,\mu_3,\mu_4,\mu_5)$, we then have that
$\Theta(\Lambda,B^{(j)})\geq2$ and $\Theta(M,B^{(j)})\geq2$, for every $j\in\{1,\ldots,n\}$.
From the equation~(\ref{eqh1}) it then follows that $\lambda_1=\lambda_2\leq\min\{\lambda_4,\lambda_5\}$,
$\mu_3=\mu_4\leq\mu_5$, and $\mu_4<\mu_1$. Now it is straightforward to check that
$\Theta(\Lambda,B^{(j)})\cup\Theta(M,B^{(j)})\geq3$, for every $j\in\{1,\ldots,n\}$.
Finally, set $A=\left(\begin{smallmatrix}t^{\lambda_{1}}&t^{\lambda_{2}}&0&t^{\lambda_{4}}&t^{\lambda_{5}}\\
t^{\mu_{1}}&0&t^{\mu_{3}}&t^{\mu_{4}}&\eta t^{\mu_{5}}\end{smallmatrix}\right)^\top\in\Kf^{5\times2}$, for some $\eta\in\F\setminus\{0,1\}$.
The application of Lemma~\ref{system2} completes the proof.
\end{proof}

Now we can prove one of the main results of this note.

\begin{thr}\label{mn}
Let $C\in\R^{5\times n}$, $\trop(C)\leq3$, and $|\F|\geq4$. Then $\K_\F(C)\leq3$.
\end{thr}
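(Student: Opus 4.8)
The plan is to reduce the general case to the special structured form handled by Lemma~\ref{hardl}, using the combinatorial flexibility granted by Propositions~\ref{permut} and Theorem~\ref{tropthr}. First I would normalize $C$. Since $\trop(C)\leq3$, Theorem~\ref{tropthr} gives a tropical dependence of some four rows, say rows $1,2,4,5$, realized by some $\lambda\in\R^4$; after permuting rows we may also assume that row $3$ is the ``extra'' one. By Proposition~\ref{permut} we can add constants to rows and columns, so (after subtracting suitable row/column constants read off from $\lambda$ and from an optimal assignment) I would arrange that $C$ has nonnegative entries and that its first two rows are identically $0$ in a large block of columns. The point of this first step is to force a lot of zeros into the first two rows while keeping the hypotheses $\trop(C)\le3$ and nonnegativity intact.

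Next I would do a case analysis on how the remaining columns distribute their zeros among rows $3,4,5$. If \emph{every} column of $C$ already has three zeros, we are done immediately by Lemma~\ref{3ic}. Otherwise, partition the columns according to which of the patterns ``zero in rows $\{1,2\}$ only'', ``zero also in row $3$'', ``zero also in row $4$'', ``zero also in row $5$'' they realize; by further row/column normalization (Proposition~\ref{permut}) one can scale the nonzero entries in rows $3,4,5$ of the relevant blocks and subtract off minima, bringing $C$ into exactly the block shape~(\ref{eqh1}): a block of all-zero columns on rows $1,2$, together with the $B_1,\dots,B_4$ blocks carrying the $\alpha$'s, $\beta$'s, $\gamma$'s on rows $5,4,3$ respectively, and the $B'$ block. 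The side condition in Lemma~\ref{hardl} — that \emph{either} $B'$ \emph{or} $(B_1|\cdots|B_4)$ consists of positive numbers — is arranged by the sign-of-$m$ manipulation already used inside the proof of Lemma~\ref{hardl} (step~2 there), so it costs nothing here; and $v>0$ is guaranteed because rows $1,2$ must have \emph{some} common zero column (otherwise a $2\times2$ argument shows $\trop\ge$ too large, or one falls back into the all-three-zeros case). Once $C$ is in the form~(\ref{eqh1}) with $\trop(C)\le3$ and $|\F|\ge4$, Lemma~\ref{hardl} yields $\K_\F(C)\le3$ directly.

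The degenerate sub-cases need separate, easy handling: if one of $p,q,r,s$ would have to be $0$, or if rows $1,2$ happen to coincide after normalization, or if fewer than four rows are involved in the dependence, then $C$ actually has a zero entry in three rows of every column (again Lemma~\ref{3ic}), or has tropical rank $\le2$, in which case $\K_\F(C)\le \trop(C)+\text{(trivial bound)}\le 3$ is immediate from Definition~\ref{defKap} by writing down an explicit rank-$\le3$ lift (e.g. a $5\times3$ Vandermonde-type matrix over $\Kfh$, available since $|\F|\ge4$). Collecting these: every $5\times n$ matrix of tropical rank $\le3$ is, after the normalizations of Proposition~\ref{permut} and a row permutation, either covered by Lemma~\ref{3ic} or put into the form~(\ref{eqh1}) and covered by Lemma~\ref{hardl}, giving $\K_\F(C)\le3$.

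The main obstacle, I expect, is the reduction of step two: verifying that an \emph{arbitrary} nonnegative $5\times n$ matrix with $\trop\le3$ and two all-zero-in-a-block rows can genuinely be brought to the rigid template~(\ref{eqh1}) — in particular controlling the columns whose zero pattern on rows $3,4,5$ is ``mixed'' (a column with zeros in two of the last three rows, or with no zero among $\{3,4,5\}$ at all). Such columns must be shown to force either a tropically nonsingular $4\times4$ minor (contradicting $\trop\le3$, as in step~3 of the proof of Lemma~\ref{hardl}) or to be absorbable into the $B'$ block; carrying this bookkeeping out cleanly, while respecting the minimality assumption on $p+q+r+s$ that Lemma~\ref{hardl}'s proof relies on, is where the real work sits. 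Everything after $C$ is in the form~(\ref{eqh1}) is a black-box application of Lemma~\ref{hardl}.
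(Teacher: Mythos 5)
Your overall strategy coincides with the paper's: normalize via Proposition~\ref{permut} and the tropical dependence from Theorem~\ref{tropthr}, then dispatch each case either to Lemma~\ref{3ic} or to Lemma~\ref{hardl}. However, there is a genuine gap, and you have in effect flagged it yourself: the reduction of an arbitrary normalized matrix to the template~(\ref{eqh1}) is precisely the content of the theorem's proof, and your proposal does not carry it out. Two concrete pieces are missing. First, for columns whose zero pattern on rows $3,4,5$ is ``mixed,'' one must actually produce the contradiction with $\trop(B)\leq3$: in the paper this is done by exhibiting a specific tropically non-singular $4\times4$ submatrix (namely $B[2,3,4,5\,|\,1,2,j',j'']$, built from a column with a zero in row $3$ only, a column $j'$ with a zero in exactly one of rows $4,5$, and a column $j''$ supplying the complementary zero, the latter guaranteed because every row contains a zero after subtracting row minima). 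Without this argument you cannot conclude that the zero patterns are consistent enough to form the blocks $B_1,\dots,B_4$.

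Second, and more seriously, your partition of the columns by ``zero also in row $3$ / $4$ / $5$'' does not cover the case in which every column of $B[3,4,5]$ is either all-zero or all-positive. Such a matrix does \emph{not} fit the shape~(\ref{eqh1}) (which requires $p>0$, i.e.\ at least one column with zeros in rows $3,4$ and a positive entry in row $5$), so Lemma~\ref{hardl} cannot be applied as a black box there. The paper handles this by a further normalization: letting $G$ be the set of columns where $b_{3j},b_{4j},b_{5j}$ are not all equal (if $G=\emptyset$ the last three rows coincide and Lemma~\ref{3ic} applies after Proposition~\ref{permut}), setting $m=\min\bigcup_{g\in G}\{b_{3g},b_{4g},b_{5g}\}$, subtracting $\min\{m,b_{3j}\}$ from each column and adding $m$ to the first two rows, which forces the matrix into one of the previously treated configurations. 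Your proposal has no analogue of this step. The auxiliary devices you invoke instead (a Vandermonde-type lift for tropical rank $\leq2$, forcing the first two rows to vanish on a large block) are not needed and do not substitute for the missing case analysis; until the three-way split on the zero patterns of $B[3,4,5]$ is executed, the proof is incomplete.
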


\begin{proof}
1. Theorem~\ref{tropthr} implies that the rows of $C$ are tropically dependent.
Applying Proposition~\ref{permut}, we assume without a loss of generality
that $C$ consists of nonnegative numbers, and every column of $C$ contains at least two zeros.

2. Let the minimal element of the $i$th row of $C$ is $h_i$. For every $i\in\{1,\ldots,5\}$, we add $(-h)$ to every entry of the $i$th row
of $C$, and we denote the matrix obtained by $B$. Every row of $B$ now contains at least one zero. By item~1, the entries of $B$ are nonnegative,
and every column of $B$ contains at least two zeros.

3. By Proposition~\ref{permut}, we have that $\K_\F(B)=\K_\F(C)$, $\trop(B)\leq3$.

4. If every column of $B$ contains at least three zeros, then Corollary~\ref{3ic} implies that $\trop(B)\leq3$.
So we can further assume without a loss of generality that $b_{11}=b_{21}=0$, and the elements $b_{31}, b_{41}, b_{51}$ are positive.
The three cases are possible.

\textit{Case 1.} Let some column of $B[3,4,5]$ contain exactly one zero entry. We assume without a loss of generality
that $b_{32}=0$, $b_{42}>0$, $b_{52}>0$. Assume $b_{i'j'}=0$, $b_{i''j'}>0$ for some $i',i''\in\{4,5\}$, $j'\in\{1,\ldots,n\}$.
By item 2, there exists $j''\in\{1,\ldots,n\}$ such that $b_{i''j''}=0$. We note that the matrix $B[2,3,4,5|1,2,j',j'']$
is tropically non-singular, that is, $\trop(B)\geq4$, so we get a contradiction.

Thus we see that for every $j\in\{1,\ldots,n\}$, it holds that either $b_{4j}=b_{5j}=0$ or $b_{4j},b_{5j}>0$.
So we can see that $B$ satisfies the assumptions of Lemma~\ref{hardl} up to permutations of rows and columns.

\textit{Case 2.} Assume that some column of $B[3,4,5]$ contains exactly two zero entries,
and no column of $B[3,4,5]$ contains exactly one zero entry. In this case, $B$ satisfies
the assumptions of Lemma~\ref{hardl} up to permutations of its columns.

\textit{Case 3.} Finally, we assume that for every $j\in\{1,\ldots,n\}$, it holds that either $b_{3j}=b_{4j}=b_{5j}=0$ or $b_{3j},b_{4j},b_{5j}>0$.
Let us consider the set $G$ of all $j\in\{1,\ldots,n\}$ such that the elements $b_{3j}, b_{4j}, b_{5j}$ are not all equal.
If $G$ is empty, then the last three rows of $B$ coincide, so from Proposition~\ref{permut} and Corollary~\ref{3ic}
it follows that $\Kap(B)\leq3$.

If $G$ is non-empty, then we denote $m=\min\bigcup_{g\in G}\{b_{3g},b_{4g},b_{5g}\}$.
We then add $-\min\{m,b_{3j}\}$ to every entry of the $j$th column of $B$ ($j$ runs over $\{1,\ldots,n\}$),
we also add $m$ to every element of the first two rows of $B$. We note that the matrix obtained
satisfies the conditions of Corollary~\ref{3ic}, or Case 1, or Case 2 up to permutations of its columns.
By Proposition~\ref{permut}, the matrix obtained has the same tropical and Kapranov ranks as $B$.

In any of the Cases 1--3, we see that $\K_\F(B)\leq3$. The proof is complete.
\end{proof}

Now let us show that the condition $|F|\geq4$ is necessary in the formulation of Theorem~\ref{mn}.

\begin{example}\label{left2}
Let $$B=\begin{pmatrix}1&0&0&0&1\\0&1&0&0&1\\0&0&1&0&1\\0&0&0&0&1\\1&1&1&1&0\end{pmatrix},\,\,
D=\begin{pmatrix}1&0&0&0&1\\0&1&0&0&1\\0&0&0&0&1\\0&0&0&0&1\\1&1&1&1&0\end{pmatrix}.$$
Then $\trop(B)=\trop(D)=3$, $\K_{\F_3}(B)=\K_{\F_2}(D)=4$.
\end{example}

\begin{proof}
By Definition~\ref{deftrop}, we have straightforwardly $\trop(B)=\trop(D)=3$.
Note that if a matrix $C\in\Kfh_{\F_2}^{5\times5}$ satisfies $D=\deg C$,
then $\deg\det C[1,2,3,5|1,2,3,5]=0$, so that $\K_{\F_2}(D)\geq4$. On the other hand,
the matrix $D$ contains repeating rows, thus $\K_{\F_2}(D)=4$.

Assume that a matrix $A'\in\Kfh_{\F_3}^{5\times5}$ satisfies $B=\deg A'$. Without a loss of generality
it can be assumed that $a'_{ij}=t^{b_{ij}}$, for every pair $(i,j)$ satisfying $4\in\{i,j\}$.
Note that if $\det A'[p,q,4,5|p,q,4,5]=0$ holds for every $p,q\in\{1,2,3\}$, then the entries $a_{pq}$
have the $-1$ as their constant terms, and in this case $\deg\det A'[1,2,3,5|1,2,3,5]=0$. Therefore, we
see that $\K_{\F_3}(B)\geq4$.
On the other hand, we can set $a_{ij}=t^{b_{ij}}$, for every $(i,j)\in\{1,2,3,4,5\}^2\setminus\{(4,2), (4,3)\}$, and $a_{42}=a_{43}=2+2t$,
and note that the row $A_{(2)}+A_{(3)}+A_{(4)}$ is zero, and $\deg A=B$. Definition~\ref{defKap} shows therefore that $\K_{\F_3}(B)=4$.
\end{proof}

Now we can give a general answer for Question~\ref{que_dss}.

\begin{thr}
A $5\times5$ tropical matrix $B$ with tropical rank $3$ and Kapranov rank $4$ does exist
if and only if the ground field contains at most three elements.
\end{thr}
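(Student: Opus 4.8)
The plan is to assemble the two directions of the equivalence from results already proved in the excerpt. For the ``if'' direction, suppose the ground field $\F$ has at most three elements, i.e.\ $\F$ is isomorphic to $\F_2$ or $\F_3$. Then Example~\ref{left2} exhibits a concrete $5\times5$ matrix with tropical rank $3$ and Kapranov rank $4$: for $\F=\F_2$ we take the matrix $D$, and for $\F=\F_3$ we take the matrix $B$ of that example. (Strictly speaking, if $|\F|\le 3$ then $\F$ is either the two-element field or the three-element field up to isomorphism, and the Kapranov rank depends on $\F$ only through its isomorphism type, since an isomorphism $\F\to\F'$ extends to an isomorphism of the Hahn series fields $\Kfh_\F\to\Kfh_{\F'}$ commuting with $\deg$.) This produces the required matrix whenever the ground field has at most three elements.

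For the ``only if'' direction, I would argue contrapositively: if the ground field $\F$ satisfies $|\F|\ge4$, then no $5\times5$ matrix can have tropical rank $3$ and Kapranov rank $4$. Indeed, take any $B\in\R^{5\times5}$ with $\trop(B)\le3$. Theorem~\ref{mn}, applied with $n=5$, gives $\K_\F(B)\le3$. Since it is a standard and easily verified fact that tropical rank never exceeds Kapranov rank (a classically dependent set of columns of $A$ with $\deg A=B$ forces, via the vanishing of every maximal minor, a tropical singularity, so $\trop(B)\le\K_\F(B)$), every matrix with $\trop(B)=3$ automatically has $\K_\F(B)\ge3$, and combined with Theorem~\ref{mn} we get $\K_\F(B)=3\neq4$. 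Hence no such matrix exists when $|\F|\ge4$.

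Combining the two directions yields the biconditional. The only mild subtlety — and the one place where I would be careful rather than terse — is the reduction ``$|\F|\le3$ implies $\F\cong\F_2$ or $\F\cong\F_3$ and the Kapranov rank is an isomorphism invariant of $\F$''; everything else is a direct citation of Theorem~\ref{mn} and Example~\ref{left2}. I do not anticipate a genuine obstacle here: the theorem is essentially a packaging statement, and all the analytic content has already been discharged in Theorem~\ref{mn} (the ``if $|\F|\ge4$'' side) and in the explicit computations of Example~\ref{left2} (the ``if $|\F|\le3$'' side).

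\begin{proof}
Suppose first that the ground field $\F$ has at most three elements. Then $\F$ is isomorphic to either $\F_2$ or $\F_3$, and since an isomorphism of fields $\F\to\F'$ induces an isomorphism $\Kfh_\F\to\Kfh_{\F'}$ commuting with the mapping $\deg$, the Kapranov rank over $\F$ coincides with the Kapranov rank over the isomorphic copy. If $\F\cong\F_2$, the matrix $D$ of Example~\ref{left2} satisfies $\trop(D)=3$ and $\K_\F(D)=4$; if $\F\cong\F_3$, the matrix $B$ of Example~\ref{left2} satisfies $\trop(B)=3$ and $\K_\F(B)=4$. In either case, a $5\times5$ matrix with tropical rank $3$ and Kapranov rank $4$ exists.

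Conversely, suppose the ground field $\F$ satisfies $|\F|\ge4$, and let $B\in\R^{5\times5}$ be any matrix with $\trop(B)=3$. By Theorem~\ref{mn}, $\K_\F(B)\le3$. On the other hand, if $A\in\left(\Kf^*\right)^{5\times5}$ is any matrix with $\deg A=B$ and $\mathrm{rank}(A)=\K_\F(B)$, then any $(\K_\F(B)+1)$-by-$(\K_\F(B)+1)$ submatrix of $A$ is singular, so the tropical permanent of the corresponding submatrix of $B$ is attained at least twice; hence $B$ has no tropically non-singular submatrix of size exceeding $\K_\F(B)$, that is, $\trop(B)\le\K_\F(B)$. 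Combining the two inequalities gives $\K_\F(B)=3$, so $B$ does not have Kapranov rank $4$. Therefore no $5\times5$ matrix with tropical rank $3$ and Kapranov rank $4$ exists when $|\F|\ge4$.
\end{proof}
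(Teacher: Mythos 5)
Your proposal is correct and matches the paper's argument, which simply cites Theorem~\ref{mn} and Example~\ref{left2}; the extra care you take with the isomorphism invariance of Kapranov rank and the standard inequality $\trop(B)\le\K_\F(B)$ is sound but not needed beyond what the paper leaves implicit.
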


\begin{proof}
Follows directly from Theorem~\ref{mn} and Example~\ref{left2}.
\end{proof}

I am grateful to my scientific advisor Professor Alexander E. Guterman for constant attention to my work.

\end{document}